\documentclass[12pt,a4paper]{article}
\usepackage{amsmath,amssymb,amsfonts}
\usepackage[latin1]{inputenc}

\def\Bbb{\mathbb}

\title{\bf Digit frequencies and class numbers\thanks{MSC2020: 11A63; 11R29; 11R18. Keywords: Digit expansions; class numbers; generalized Bernoulli numbers.}}

\author{Kurt Girstmair}

\date{}

\makeatletter
\let\@@maketitle=\maketitle
\def\maketitle{\def\thispagestyle##1{\relax}\@@maketitle}
\makeatother
\newtheorem{theorem}{Theorem}

\def\BE{\begin{equation}}
\def\EE{\end{equation}}
\def\BD{\begin{displaymath}}
\def\ED{\end{displaymath}}
\def\BA{\begin{array}}
\def\EA{\end{array}}
\def\BEA{\begin{eqnarray*}}
\def\EEA{\end{eqnarray*}}
\def\BI{\bibitem}

\def\Z{\Bbb Z}
\def\Q{\Bbb Q}

\def\phi{\varphi}
\def\EPS{\varepsilon}

\def\MB{\mbox}
\def\LD{\ldots}
\def\OV{\overline}

\def\WH{\widehat}

\def\ORD{\mbox{ord}}

\def\DIV{\,|\,}
\def\NDIV{\, \nmid \,}

\def\MN{\medskip\noindent}
\def\STOP{\hfill$\Box$}

\def\BCH{B_{\chi}}
\def\BCHPS{B_{\chi\psi}}
\def\BCHPSF{B_{\chi\psi_5}}

\def\LS#1#2{ \left( \frac{#1}{#2} \right) }
\def\LF#1#2{\left\lfloor\frac{#1}{#2}\right\rfloor}

\def\RE{{\rm Re}}

\begin{document}
\maketitle
\normalsize

\begin{abstract}

Let $p>3$ be a prime and $b\ge 2$ an integer such that $p$ does not divide $b$. Then $1/p$ has a periodic digit expansion with respect to the basis $b$. The length $m$ of the period
is the (multiplicative) order of $b$ mod $p$. In the case $m=(p-1)/2$,
the frequency of each digit $k\in\{0,1,\LD b-1\}$ can be expressed in terms of generalized (first order) Bernoulli numbers. In some cases only Bernoulli numbers belonging to quadratic characters occur.
This means that the frequencies can be written in terms of class numbers of
imaginary quadratic number fields (the so-called class number case). In the present paper we classify the numbers $p$ and $b$ falling under the class number case.
We also highlight one of the simplest examples not falling under this
case ($p\equiv 1$ mod $4$, $b=10$) and the most complex example of the class number case. Moreover, we show that knowing the frequency of each $k$ is equivalent to knowing
the respective Bernoulli numbers.

\end{abstract}

%%%%%%%%%%%%%%%%%%%%%%%%%%%%%%%%%%%%%%%%%%%%%
\section*{1. Introduction}
%%%%%%%%%%%%%%%%%%%%%%%%%%%%%%%%%%%%%%%%%%%%%

Let $p>3$ be a prime, $b\ge 2$ an integer such that $p\NDIV b$.
Then
\BE
\label{1.1}
  \frac 1p=\sum_{j=1}^{\infty}c_jb^{-j},
\EE
where the numbers $c_j\in\{0,1,\LD,b-1\}$ are the {\em digits} of $1/p$ with respect to the basis $b$. It is well-known that the sequence of the digits is periodic and
that $(c_1,\LD,c_m)$ is a period, $m$ being the (multiplicative) order of $b$ mod $p$; see \cite{Gi1}.

Let $m=(p-1)/2$ be the order of $b$ mod $p$. Hence $(c_1,\LD,c_m)$ is a period of the digit expansion (\ref{1.1}) of $1/p$. For $k\in\{0,1,\LD,b-1\}$, the number
\BD
  n_{b,k}=|\{j\in\{1,\LD,m\}; c_j=k\}
\ED
is the {\em frequency} of the digit $k$ in the period $(c_1,\LD,c_m)$.
This paper is devoted to the study of the frequencies $n_{b,k}$.

Previously, we described these frequencies in the case $p\equiv 3$ mod $4$, $b=10$, in terms of the class numbers of two imaginary quadratic number fields, namely, $\Q(\sqrt{-p})$ and $\Q(\sqrt{-5p})$.
We also did this for $b=8$ and arbitrary primes $p>3$; see \cite{Gi2}. Our main tools were results of \cite{Be}. Actually, the paper \cite{Be} provides the tools for describing the frequencies in terms
of class numbers in several other cases.
For instance, if $b=24$, the said paper yields the frequencies $n_{24,k}$ for several values $k\in\{0,1,\LD,23\}$ in terms of the class numbers of up to four imaginary quadratic number fields; see Section 4.

For this reason the following question is obvious: Under which conditions can we describe the digit frequencies in terms of class numbers of imaginary quadratic fields? We answer this question in Sections 2 and 3,
on carving out what we call the {\em class number case}; see Theorem \ref{t1}.

If $p$ and $b$ do not fall under this case, the digit frequencies can be described by generalized Bernoulli numbers (of order 1) belonging to Dirichlet characters of an order $\ge 3$.
Section 4 is devoted to the case $p\equiv 1$ mod $4$, $b=10$, and to the case $b=24$.
Whereas $p\equiv 1$ mod $4$, $b=10$, is one the simplest examples not falling under the class number case, $b=24$ represents the most complex class number case.

In Section 5 we briefly show that the respective Bernoulli numbers
can be reconstructed, in a uniform way, from the frequencies $n_{b,k}$, $k=0,\LD,b-1$. In other words, the information
encoded in the digit frequencies is basically equivalent to the information encoded in the respective Bernoulli numbers.

Connections between the digits of rational numbers and class number factors occur in a number of papers; see \cite{Gi0}, \cite{Hi}, \cite{MuTh},
\cite{ChKr},  \cite{Mi}, \cite{PuSa}, and \cite{Sh}.

%%%%%%%%%%%%%%%%%%%%%%%%%%%%%%%%%%%%%%%%%%%%%%%%%%%%%
\section*{2. Digit frequencies and Bernoulli numbers}
%%%%%%%%%%%%%%%%%%%%%%%%%%%%%%%%%%%%%%%%%%%%%%%%%%%%%

Let $p$ and $b$ be as above.
For an integer $k$ we define $(k)_p$ as the integer $j\in\{0,1,\LD,p-1\}$ that satisfies
$j\equiv k$ mod $p$. We put
\BD
  \theta_b(k)=(b(k)_p-(bk)_p)/p
\ED
for $k\in\Z$. Then the digit $c_j$ of $1/p$ has the form
\BD
  c_j=\theta_b(b^{j-1}),
\ED
for all $j\ge 1$; see \cite{Gi1}.
For $k\in\{0,1,\LD b-1\}$ we have
\BE
\label{1.3}
 c_j=k \MB{ if, and only if, } (b^{j-1})_p\in \left(\frac{kp}b,\frac{(k+1)p}b\right);
\EE
see \cite{Gi2}. Let $Q$ be the set of quadratic residues  mod $p$ in $\{1,\LD,p-1\}$
and $N$ the set of quadratic non-residues mod $p$ in $\{1,\LD,p-1\}$. Since the numbers $(b^{j-1})_p$, $j=1,\LD,q$, run through $Q$, we obtain
\BD
 n_{b,k}= \left|Q\cap\left(\frac{kp}b,\frac{(k+1)p}b\right)\right|.
\ED
Let
\BE
\label{1.6}
  \delta_{b,k}=\left|Q\cap\left(\frac{kp}b,\frac{(k+1)p}b\right)\right|-\left|N\cap\left(\frac{kp}b,\frac{(k+1)p}b\right)\right|.
\EE
Then we have
\BE
\label{1.7}
  n_{b,k}=\frac 12 \left(\left|\Z\cap\left(\frac{kp}b,\frac{(k+1)p}b\right)\right|+\delta_{b,k}\right)
\EE
for $k=0,1,\LD,b-1$. Hence the frequencies $n_{b,k}$ are known if the numbers $\delta_{b,k}$ are known. These numbers, however, can be expressed in terms of generalized Bernoulli numbers.

Indeed, for $k\in\{0,1,\LD,b\}$ let
\BD
  \gamma_{b,k}=\sum_{1\le r\le kp/b}\chi(r),
\ED
where $\chi$ is the quadratic Dirichlet character defined by the Legendre symbol, i.e.,
\BD
 \chi(r)=\LS rp.
\ED
In particular, $\gamma_{b,0}=0=\gamma_{b,b}$.
By (\ref{1.6}), we have
\BE
\label{1.8}
  \delta_{b,k}=\gamma_{b,k+1}-\gamma_{b,k},
\EE
$k=0,\LD,b-1$. Let $d=(b,k)$. Then
\BE
\label{1.10}
  \gamma_{b,k}=\gamma_{b/d,k/d}.
\EE
If $(b,k)=1$, we have the formula
\BE
\label{1.12}
 \gamma_{b,k}=-\BCH+\frac{\chi(b)}{\phi(b)}\sum_{\psi\in X_b}\OV{\psi}(-kp)\BCHPS;
\EE
see \cite[Eq. (6)]{Sz}.
Here $\phi$ denotes Euler's function, $X_b$ the set of all Dirichlet characters mod $b$, and the bar means complex conjugation.
The Bernoulli numbers $\BCH$ and $\BCHPS$ are defined by
\BD
   \BCH=\frac 1p\sum_{j=1}^{p-1}j\chi(j) \MB{ and }\BCHPS=\frac 1{bp}\sum_{j=1}^{bp-1}j\chi(j)\psi(j).
\ED
If we combine (\ref{1.8}), (\ref{1.10}),  and (\ref{1.12}), we can express $\delta_{b,k}$ in terms of the generalized Bernoulli numbers $\BCH$ and $\BCHPS$, where $\psi$ runs through
the sets $X_d$ for the divisors $d$ of $b$.

Now we can describe the {\em class number case} more precisely. In fact, this case occurs if, and only if, all non-vanishing Bernoulli numbers $\BCHPS$, $\psi\in X_b$, in (\ref{1.12}) belong to quadratic characters.
Accordingly, the numbers $-\BCHPS$ are, for these characters, class numbers of imaginary quadratic number fields. It turns out that this definition is strong enough. Indeed, it restricts the possible numbers $b$
to few cases, in which also the non-vanishing numbers $\BCHPS$, $\psi\in X_d$, $d$ a divisor of $b$, belong to characters $\psi$ of an order $\le 2$.

We have the following result.

\begin{theorem} % Theorem 1 %%%%%%%%%%%%%%%%%%%%%%%%%%%%%%%%%%%%%%%
\label{t1}

If $p\equiv 3$ mod 4, then the class number case occurs only for $b\in\{5,10\}$ and for divisors  $b$ of $24$, $b\ge 2$.
If $p\equiv 1$ mod 4, then the class number case occurs only for divisors  $b$ of $24$, $b\ge 2$.

\end{theorem}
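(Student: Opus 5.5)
The plan is to determine, for each $b$, which characters $\psi\in X_b$ give a non-vanishing $\BCHPS$. Then we require that every such $\psi$ has order at most $2$.

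First, the vanishing criterion. Since $\chi\psi$ is a character mod $bp$, we have $\BCHPS=0$ unless $\chi\psi$ is odd, apart from the trivial exception of the principal character, which does not arise here. So $\BCHPS$ can be non-zero only when $\psi(-1)=-\chi(-1)$. Conversely, for an odd primitive character $\eta$ the analytic class number formula (or $L(0,\eta)=-B_{1,\eta}\ne 0$) gives $B_{1,\eta}\neq 0$. If $\chi\psi$ is odd but imprimitive, let $\eta$ be the primitive character inducing it. Then
\BD
B_{\chi\psi}=B_{\eta}\prod_{\ell\mid bp}(1-\eta(\ell)),
\ED
and this vanishes only if $\eta(\ell)=1$ for some prime $\ell\mid b$.

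Second, the translation into group theory. The class number case requires that every $\psi\in X_b$ with $\psi(-1)=-\chi(-1)$ and with $\prod_{\ell\mid b}(1-\chi\psi(\ell))\ne 0$ (computed with the primitive character) is quadratic or trivial. The Euler factors are the nuisance. I would first ignore them and demand that \emph{all} characters of the relevant parity in $X_b$ have order $\le 2$. Then I would argue separately that the Euler factors can kill at most a few characters, and cannot kill an entire parity class of non-quadratic ones.

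The main counting step is as follows.
\begin{itemize}
\item If $p\equiv 3$ mod $4$, then $\chi$ is odd, so we need every even $\psi\in X_b$ to satisfy $\psi^2=1$. The even characters form the dual of $(\Z/b\Z)^\times/\{\pm1\}$. So this group must have exponent $\le 2$, i.e.\ $a^2\equiv\pm1$ mod $b$ for all units $a$.
\item If $p\equiv 1$ mod $4$, then $\chi$ is even, and we need every odd $\psi$ to be quadratic. If some odd $\psi_0$ exists, this forces all of $X_b$ to have exponent $\le 2$, because $\psi\mapsto\psi\psi_0$ sends even characters to odd ones and $\psi_0^2=1$ as well. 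So the condition becomes $a^2\equiv 1$ mod $b$ for all units $a$. By the classical result this means $b\mid 24$. (For $b\in\{2\}$ there are no odd characters at all, which is harmless.)
\item The condition $a^2\equiv\pm1$ for all units is solved by the elementary analysis of $(\Z/b\Z)^\times$ via the CRT decomposition. The solutions are exactly $b\mid 24$ together with $b=5,10$, since there $(\Z/5\Z)^\times/\{\pm1\}$ has order $2$. Adding $3$ or $8$ to $5$ breaks the condition, as the quotient then has an element of order $4$ or fails exponent $2$.
\end{itemize}

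The obstacle is the Euler factor issue: a non-quadratic character of the right parity might have vanishing $\BCHPS$, which would enlarge the set of admissible $b$. The primes at which an Euler factor can be non-trivial are the $\ell\mid b$ with $\ell\nmid\mathrm{cond}(\psi)$. My plan here is to exhibit, for each $b$ outside the listed set, an explicit non-quadratic $\psi$ of the required parity that is primitive modulo the relevant prime-power part of $b$. For example, take a character of order $4$ or $3$ on a factor $\ell^e\,\|\,b$, and extend it so that it is non-trivial at every other prime, or so that $\chi\psi(\ell)\neq 1$ there. Concretely, I would reduce to minimal "bad" moduli ($7$, $9$, $13$, $16$, $5\cdot 3$, $5\cdot 4$, and so on) and handle multiples by pulling back characters. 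The expectation is that only finitely many Euler-factor coincidences need checking, and that these can be handled by choosing among several candidate characters.
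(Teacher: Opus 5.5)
Your group-theoretic analysis is correct and explains where the list comes from. All even characters mod $b$ have order $\le 2$ exactly when $b\mid 24$ or $b\in\{5,10\}$, and all characters mod $b$ do exactly when $b\mid 24$. (A small slip: what breaks the condition when combined with $5$ is $3$ or $4$, not $3$ or $8$, since already $3^2\not\equiv\pm1 \bmod 20$.) But for $b$ outside the list, this only produces \emph{some} $\psi$ of the right parity with $\ORD(\psi)\ge 3$. The theorem needs one with $B_{\chi\psi}\ne 0$. That is precisely the step you leave as an ``expectation'', and it is the entire content of the paper's proof, so the proposal has a genuine gap.

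Moreover, the concrete reduction you propose (settle minimal bad moduli, then pull characters back to multiples) fails. Lifting $\psi$ from $d$ to $b$ creates Euler factors $1-\chi(\ell)\psi(\ell)$ at the new primes $\ell$, and $\chi(\ell)=\pm1$ depends on $p$. For example, take $p\equiv 3 \bmod 4$. Both even characters of order $3$ mod $7$ are trivial at $13\equiv -1 \bmod 7$. So their lifts $\WH{\psi}$ to modulus $91$ have $B_{\chi\WH{\psi}}=(1-\chi(13))B_{\chi\psi}=0$ whenever $\LS{13}{p}=1$, e.g.\ for $p=23$. The minimal bad moduli also include every prime $q\ge 7$, so there is no finite list of coincidences to check. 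Finally, ``make $\psi$ nontrivial at every other prime'' cannot work at $2$ when $b\equiv 2 \bmod 4$, because there is no nontrivial character mod $2$.

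The missing idea is to remove the Euler factors through the conductor, and to control the single unavoidable one uniformly.

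\begin{itemize}
\item \textbf{Construction for $q_1\ge 7$.} Let $q_1$ be the largest prime factor of $b$. Then there are characters $\psi_1$ mod $q_1$ of both parities whose order is divisible by $(q_1-1)/2\ge 3$. Multiply one of them by quadratic characters mod each other odd prime divisor of $b$, and mod $4$ if $4\mid b$. Choose the parity of $\psi_1$ so that the product has the required parity. The conductor of the product is divisible by every prime factor of $b$, so $B_{\chi\psi}\ne 0$ unless $b\equiv 2\bmod 4$.
\item \textbf{The factor at $2$ when $b\equiv 2\bmod 4$.} Here the factor $1-\chi(2)\psi_1(2)$ remains. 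Since $\chi(2)=\pm1$ is not under control, you need $\psi_1(2)\ne\pm1$. Write $2\equiv g^k$ for a primitive root $g$ mod $q_1$. If $\psi_1(2)=\pm1$, then $(q_1-1)/2\mid 2k$, hence $2^4\equiv 1 \bmod q_1$ and $q_1\mid 15$, which is impossible.
\item \textbf{The cases $q_1\le 5$.} These need only a few direct checks:
  \begin{itemize}
  \item $25\mid b$: use characters of order divisible by $10$; $2$ is a primitive root mod $25$.
  \item $5\,\|\,b$: use order-$4$ characters with $\psi_1(2),\psi_1(3)\in\{\pm i\}$. When $p\equiv 3 \bmod 4$, multiply by the odd quadratic character mod $3$ or mod $4$; this leaves exactly $b=5,10$.
  \item $9\mid b$: $2$ is a primitive root mod $9$.
  \item $16\mid b$: there are order-$4$ characters of both parities mod $16$.
  \end{itemize}
\end{itemize}
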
 %%%%%%%%%%%%%%%%%%%%%%%%%%%%%%%%%%%%%%%%%%%%%%%%%%%%%

Suppose that $p>3$ and $b$ do not fall under the class number case.
In particular, let $\psi\in X_b$ be such that $\ORD(\psi)\ge 3$ and $\BCHPS\ne 0$. Then $\OV{\psi}$ also belongs to $X_b$,
and $B_{\chi\OV{\psi}}\ne 0$. Accordingly, a rational multiple of
\BD
   2{\rm Re}(\OV{\psi}(-kp)\BCHPS)=\OV{\psi}(-kp)\BCHPS+\psi(-kp)B_{\chi\OV{\psi}}
\ED
occurs in $\gamma_{b,k}$ (see formula (\ref{1.12})), and, in particular, in $\delta_{b,0}$ and $n_{b,0}$.

%%%%%%%%%%%%%%%%%%%%%%%%%%%%%%%%%%%%%%%%%%%%%
\section*{3. The class number case}
%%%%%%%%%%%%%%%%%%%%%%%%%%%%%%%%%%%%%%%%%%%%%

If $p\equiv 3$ mod 4, then $\chi$ is an odd character, and, accordingly, (\ref{1.12}) can be written
\BE
\label{2.4}
 \gamma_{b,k}=-B_{\chi}+\frac{\chi(b)}{\phi(b)}\sum_{\psi\in X_b^+}\OV{\psi}(kp)\BCHPS,
\EE
where $X_b^+$ is the set of all {\em even} Dirichlet characters mod $b$.
In the case $p\equiv 1$ mod 4, $\chi$ is an even character, and, thus,
\BE
\label{2.6}
 \gamma_{b,k}=-\frac{\chi(b)}{\phi(b)}\sum_{\psi\in X_b^-}\OV{\psi}(kp)\BCHPS,
\EE
where $X_b^-$ is the set of all {\em odd} Dirichlet characters mod $b$.

Let $\psi\in X_b$ and
$f$ be the conductor of $\psi$. Suppose that $\chi\psi$ is odd. Let $\psi_f$ be the character mod $f$ that induces $\psi$. Then $B_{\chi\psi_f}\ne 0$.
Moreover,
\BE
\label{2.7}
   \BCHPS=\prod_{q\DIV b}(1-\chi\psi_f(q))B_{\chi\psi_f},
\EE
$q$ running through all primes dividing $b$; see \cite[p. 275]{Sz}. In particular, $\BCHPS\ne 0$ if all prime divisors of $b$ divide $f$.

{\em Proof of Theorem \ref{t1}.} We write $b=b_1b_2\cdots b_r$, where the numbers $b_i=q_i^{e_i}$ are powers of the primes $q_i$, and $q_1>q_2>\cdots>q_r$.
In view of (\ref{2.4}) and (\ref{2.6}), we try to construct, in the case $p\equiv 3$ mod $4$, an even character $\psi$ mod $b$ such that
$\BCHPS\ne 0$ and $\ORD(\psi)\ge 3$, and in the case $p\equiv 1$ mod $4$, an odd character $\psi$ of this kind. If our construction is successful,
then $b$ does not fall under the class number case.

Suppose, first, that $b\not\equiv 2$ mod $4$. If $b$ is odd, then there are quadratic characters $\psi_i$ mod $q_i$, $i=2,\LD, r$. If $b$ is even, then there are quadratic characters $\psi_i$ mod $q_i$,
$i=2,\LD,r-1$ and a quadratic character $\psi_r$ mod $4$. Put $\EPS=\psi_2\cdots \psi_r(-1)$.
We start with the case $q_1\ge 7$. Then there is an odd character $\psi_1$ mod $b_1$  with $\ORD(\psi_1)$ divisible by $(q_1-1)/2$ as well as an even character of
this kind. If we choose $\psi_1$ suitably and put $\psi=\psi_1\cdots\psi_r$, then $\psi(-1)=\pm\EPS$ takes the required value.
The character $\psi$ lies in $X_b$, since each $\psi_i$ is also a character mod $b_i$, $i=1,\LD,r$. Its  order is divisible by $(q_1-1)/2\ge 3$.
Since the conductor of $\psi$ is divisible by $q_1,\LD,q_r$, we have
$\BCHPS\ne 0$, as we observed above.

If $b\equiv 2$ mod $4$ the same construction gives a character $\psi$ mod $b$ such that $\ORD(\psi)\ge 3$ and its conductor $f$ is divisible by $q_1\cdots q_{r-1}$.
Let $\psi_f$ be the character mod $f$ that induces $\psi$. By (\ref{2.7}), we have
\BD
  \BCHPS=(1-\chi(2)\psi_f(2))B_{\chi\psi_f}.
\ED
In order to exclude $\psi_f(2)=\pm 1$, it suffices to exclude $\psi_1(2)=\pm 1$. Let $g$ be a primitive root mod $q_1$. Then $2\equiv g^k$ for some integer $k$ and $\psi_1(2)=\psi_1(g)^k$.
But $\psi_1$ has been chosen such that $\ORD(\psi_1)\equiv 0$ mod $(q_1-1)/2$. Therefore, $\psi_1(g)$ has an order divisible by $(q_1-1)/2$. Suppose that $\psi_1(2)=\pm 1$. Then $\psi_1(g)^{2k}=1$,
so $(q_1-1)/2\DIV 2k$ and $q_1-1\DIV 4k$. Accordingly, $g^{4k}\equiv 2^4\equiv 1$ mod $q_1$ and, thus, $q_1\DIV 15$, a contradiction.

We may suppose $q_1\le 5$ now. Let $q_1=5$ and $e_1\ge 2$. Then we can choose an odd character $\psi_1$ mod $b_1$ with $\ORD(\psi_1)\equiv 0$ mod $10$ and  conductor $5^2$, as well as an even character of this kind.
 In the case $b\not\equiv 2$ mod $4$, we find the character $\psi=\psi_1\cdots\psi_r$ in the above way. In the case $b\equiv 2$ mod $4$, the conductor of $\psi$ is divisible by $5^2q_2\cdots q_{r-1}$.
This means that we must exclude $\psi_1(2)=\pm 1$. But $2$ is a primitive root mod $25$, hence the order of $\psi_1(2)$ is $\equiv 0$ mod 10. Then $\psi_1(2)^2=1$ is impossible.

So we have to deal with $q_1=b_1=5$. There is a character $\psi_1\in X_5^-$ of order $4$. If $p\equiv 1$ mod $4$, we choose $\psi$ as the character mod $b$ induced by $\psi_1$. Then
\BE
\label{2.8}
  \BCHPS=(1-\chi\psi_1(2))^k(1-\chi\psi_1(3))^l B_{\chi\psi_1},\enspace k,l\in\{0,1\}.
\EE
However, the numbers $2$ and $3$ are both primitive roots mod $5$, which means $\psi_1(2),\psi_1(3)\in\{\pm i\}$. By (\ref{2.8}), $\BCHPS$ cannot vanish.

Let $p\equiv 3$ mod $4$. In the case $r=3$, we choose the quadratic character $\psi_2\in X_3^-$. Let $\psi$ be the (even) character mod $b$ induced by $\psi_1\psi_2$. Then
\BD
\label{2.10}
  \BCHPS=(1-\chi\psi_1\psi_2(2))B_{\chi\psi_1\psi_2}.
\ED
Since  $\psi_1(2)=\pm i$, $\BCHPS$ cannot vanish. In the case $r=2$ and $q_2=3$, we choose $\psi_2\in X_3^-$ as in the foregoing case and put $\psi=\psi_1\psi_2$.
In the case $r=2$, $q_2=2$ and $e_2\ge 2$, we choose $\psi_2\in X_4^-$ and argue in the same way. Hence only the cases $b=5$ and $b=10$ are left over.

If $q_1\in\{2,3\}$, we have to exclude the cases $9\DIV b$ and $16\DIV b$. This can be done by means of similar considerations. For instance, suppose that $r=2$ and $e_1\ge 2$. Then $e_2\ge 1$.
We have a character in $X_9^-$ of order $6$ and a character in $X_9^+$ of order $3$. Let $\psi_1$ be one of these characters, its sign being as required by the conditions $p\equiv 3$ mod $4$ and $p\equiv 1$ mod $4$.
Let $\psi$ be the character mod $b$ induced by $\psi_1$. Then
\BD
 \BCHPS=(1-\chi\psi_1(2))B_{\chi\psi_1}.
\ED
But $2$ is a primitive root mod $9$, and, accordingly, $\psi_1(2)$ a primitive $6$th or $3$rd root of unity, which means $\BCHPS\ne 0$.

In order to exclude $16\DIV b$, one observes that both $X_{16}^+$ and $X_{16}^-$ contain a character of order $4$.
\STOP

Theorem \ref{t1} says when the class number case is possible. We still have to show that it actually occurs for $p\equiv 3$ mod $4$, $b\in\{5,10\}$, and for arbitrary $p>3$ and divisors $b$ of $24$, $b\ge 2$.

The set $X_5^+$ consists of the principal character and $\psi_5^+$, the latter being defined by the Legendre symbol, i.e., $\psi_5^+(j)=\LS j5$. The set $X_{10}^+$ consists of the characters mod $10$ induced
by these two characters. Note that the only proper divisors $d\ge 2$ of $10$ are $2$ and $5$; $X_2^+$ consists of the principal character and $X_5^+$ has just been described.

For divisors $b\ge 2$ of 24 all characters mod $b$ have an order $\le 2$. This is clear for $b\in\{2,3, 4,6\}$, since the order of the group $X_b$ is $\le 2$, and for $b=8$,
since the group $X_8$ is isomorphic to $(\Z/8\Z)^{\times}$,
a group containing only elements of an order $\le 2$. The canonical decomposition $\psi=\psi_3\psi_d$ of a character $\psi\in X_b$, $b\in\{12, 24\}$, into characters mod $3$ and mod $d$, $d\DIV 8$,
shows that $\psi$ has also an order $\le 2$.

%%%%%%%%%%%%%%%%%%%%%%%%%%%%%%%%%%%%%%%%%%%%%%%%%%%%%%%%%%%%%%%%%%%%%%%
\section*{4. The case $p\equiv 1$ mod $4$, $b=10$, and the case $b=24$}
%%%%%%%%%%%%%%%%%%%%%%%%%%%%%%%%%%%%%%%%%%%%%%%%%%%%%%%%%%%%%%%%%%%%%%%

We adopt the above notation.
The case $p\equiv 1$ mod $4$, $b=10$, is one of the simplest examples not falling under the class number case. Indeed, it involves only the Bernoulli numbers $\BCHPSF$, $B_{\chi\OV{\psi_5}}$, where $\chi(j)=\LS jp$ and
the odd character $\psi_5$ mod $5$ is defined by $\psi_5(2)=i$.

In the case $p\equiv 1$ mod $4$, $b\ge 2$, we have the symmetry
\BD
 n_{b, b-1-k}=n_{b,k},
\ED
$k=0,\LD,b-1$. In fact, for $j\in\{1,\LD,p-1\}$,
\BD
 j\in \left(\frac{kp}b,\frac{(k+1)p}b\right) \MB{ if, and only if, } p-j\in \left(\frac{(b-k-1)p}b,\frac{(b-k)p}b\right),
\ED
and $j\in Q$ if, and only if $p-j\in Q$, since $\LS{-1}p=1$. The said symmetry follows from (\ref{1.6}) and (\ref{1.7}).

This means that we can restrict ourselves to $n_{10,0},\LD,n_{10, 4}$ in the case $p\equiv 1$ mod $4$, $b=10$ in question.
By (\ref{1.8}) and (\ref{1.10}), we obtain the decisive quantities $\delta_{10,0},\LD,\delta_{10,4}$
from $\gamma_{10,1}$, $\gamma_{10,2}=\gamma_{5,1}$, $\gamma_{10,3}$, $\gamma_{10,4}=\gamma_{5,2}$, and $\gamma_{10,5}=\gamma_{2,1}$.

The  only characters $\psi\in X_{10}^-$ are the characters induced by $\psi_5$ and its complex-conjugate $\OV{\psi_5}$. By (\ref{2.7}), the factor $1-\chi\psi_5(2)=1-\chi(2)i$ occurs in
formula (\ref{2.6}) as well as its complex-conjugate.
We obtain, for $k\in\{1,3\}$,
\BD
  \gamma_{10,k}=-\frac{\chi(10)}2\RE(\OV{\psi_5}(kp)(1-\chi(2)i)\BCHPSF).
\ED
Moreover, we have, for $k\in\{1,2\}$,
\BD
  \gamma_{5,k}=-\frac{\chi(2)}2\RE(\OV{\psi_5}(kp)\BCHPSF).
\ED
Finally, $\gamma_{2,1}=0$.

\MN
{\em Example.} In the case $p=157$, we have $\BCHPSF=-8+2i$, $\chi(10)=1$, $\chi(2)=-1$, $\psi_5(p)=i$ and, thus
\BD
  \delta_{10,0}=\gamma_{10,1}=-\frac 12\RE(-i(1+i)(-8+2i))=3
\ED
and,
\BD
  \gamma_{5,1}=\frac 12\RE(-i(-8+2i))=1.
\ED
Accordingly, $\delta_{10,1}=\gamma_{5,1}-\gamma_{10,1}=1-3=-2$. Formula (\ref{1.7}) gives
\BD
  n_{10,0}=\frac 12\left(\LF{157}{10}+3\right)=9\MB{ and }n_{10,1}=\frac 12\left(\left(\LF{2\cdot 157}{10}-\LF{157}{10}\right)-2\right)=7.
\ED

The case $b=24$ is the most complex of the class number cases. Here we have to deal with characters mod $d$ for certain divisors $d$ of $24$.
We denote them by $\psi_d^+$ and $\psi_d^-$, where the sign $\pm$ is the sign of the respective character.
We write $\WH{\psi_d^+}$ and $\WH{\psi_d^-}$ for the character mod $24$ induced by such a character mod $d$. It turns out that
\BE
\label{3.0}
  X_{24}^+=\{\WH{\psi_1^+}, \WH{\psi_8^+}, \WH{\psi_{12}^+}, \psi_{24}^+\} \MB{ and } X_{24}^-=\{\WH{\psi_3^-}, \WH{\psi_4^-}, \WH{\psi_{8}^-}, \psi_{24}^-\}.
\EE
Here $\psi_1$ is the principal character, $\psi_8^+$ is defined by $\psi_8^+(3)=-1$, $\psi_3^-$ defined by $\psi_3^-(2)=-1$, $\psi_4^-$ defined by $\psi_4^-(3)=-1$, $\psi_8^-$ defined by
$\psi_8^-(3)=1$. Moreover, $\psi_{12}^+=\psi_3^-\psi_4^-$, $\psi_{24}^+=\psi_3^-\psi_8^-$, and $\psi_{24}^-=\psi_3^-\psi_8^+$.

First let $p\equiv 3$ mod $4$ and $(k,24)=1$. Formula (\ref{2.4}) for $\gamma_{b,k}$ involves the Bernoulli numbers
$B_{\chi\WH{\psi_1^+}}=(1-\chi(2))(1-\chi(3))\BCH$, $B_{\chi\WH{\psi_8^+}}=(1+\chi(3))B_{\chi\psi_8^+}$, $B_{\chi\WH{\psi_{12}^+}}=B_{\chi\psi_{12}^+}$, and
$B_{\chi\psi_{24}^+}$. Let $h(m)$ denote the class number of the field $\Q(\sqrt m))$. Then $\BCH=-h(-p)$ (recall $p>3$), $B_{\chi\psi_8^+}=-h(-2p)$, $B_{\chi\psi_{12}^+}=-h(-3p)$,
and $B_{\chi\psi_{24}^+}=-h(-6p)$. Hence (\ref{2.4}) takes the form
\begin{eqnarray}
\label{3.2}
 \gamma_{24,k}&=&h(-p)-\frac{\chi(6)}8((1-\chi(2))(1-\chi(3))h(-p)+\psi_8^+(kp)(1+\chi(3))h(-2p)+\nonumber\\
              &&\psi_{12}^+(kp)h(-3p)+\psi_{24}^+(kp)h(-6p))
\end{eqnarray}

\MN
{\em Example.} Let $p=71$ and $k=1$. We have $h(-p)=7$, $h(-2p)=4$, $h(-3p)=8$, and $h(-6p)=24$. Moreover, $\chi(2)=\chi(3)=1$, $\psi_8^+(p)=1$, $\psi_{12}^+(p)=1$, $\psi_{24}^+(p)=1$.
Therefore, formula (\ref{3.2}) yields
\BD
 \delta_{24,0}=\gamma_{24,1}=7-\frac 18(0+2\cdot 4+1\cdot 8+1\cdot 24)=2.
\ED
We see that each of the four class numbers plays a role in this computation.

The identity $\delta_{24,1}=\gamma_{12,1}-\gamma_{24,1}$ involves the number $\gamma_{12,1}$. In order to compute it, we have to work with
$X_{12}^+=\{\WH{\psi_1^+},\psi_{12}^+\}$, and, therefore, only with the Bernoulli numbers $\BCH=-h(-p)$ and $B_{\chi\psi_{12}^+}=-h(-3p)$.
In the case $p=71$, we obtain $\gamma_{12,1}=5$ and $\delta_{24,1}=5-2=3$. By (\ref{1.7}), $n_{24,0}=2$ and $n_{24, 1}=3$.

Let $p\equiv 1$ mod $4$ and $(k,24)=1$. According to (\ref{3.0}), the Bernoulli numbers entering formula (\ref{2.6}) are $B_{\chi\WH{\psi_3^-}}=(1+\chi(2))B_{\chi\psi_3^-}$,
$B_{\chi\WH{\psi_4^-}}=(1+\chi(3))B_{\chi\psi_4^-}$, $B_{\chi\WH{\psi_8^-}}=(1-\chi(3))B_{\chi\psi_8^-}$, and $B_{\chi\psi_{24}^-}$.
We have $B_{\chi\psi_3^-}=-h(-3p)$, $B_{\chi\psi_4^-}=-h(-p)$, $B_{\chi\psi_8^-}=-h(-2p)$, and $B_{\chi\psi_{24}^-}=-h(-6p)$. Hence formula (\ref{2.6}) can be written
\begin{eqnarray}
\label{3.4}
 \gamma_{24,k}&=&\frac{\chi(6)}8(\psi_3^-(kp)(1+\chi(2))h(-3p)+\psi_4^-(kp)(1+\chi(3))h(-p)+\nonumber\\
              &&\psi_{8}^-(kp)(1-\chi(3))h(-2p)+\psi_{24}^-(kp)h(-6p)).
\end{eqnarray}
Since one of the factors $1+\chi(3)$, $1-\chi(3)$ vanishes, we see that at most three class numbers actually occur in this formula for a specific $p$.

\MN
{\em Example.} Let $p=149$ and $k=1$. We have $h(-3p)=14$, $h(-p)=14$, $h(-2p)=6$, and $h(-6p)=28$. Moreover, $\chi(2)=\chi(3)=-1$, $\psi_3^-(p)=-1$, $\psi_{4}^-(p)=1$, $\psi_8^-(p)=-1$, $\psi_{24}^-(p)=1$.
Therefore, formula (\ref{3.4}) yields
\BD
 \delta_{24,0}=\gamma_{24,1}=\frac 18(0+0+(-1)\cdot 2\cdot 6+1\cdot 28)=2
\ED
and, by (\ref{1.7}), $n_{24,0}=4$.

%%%%%%%%%%%%%%%%%%%%%%%%%%%%%%%%%%%%%%%%%%%%%%%%%%%%%%%%%%%%%%%%%%%%%%%%%%%%%%%%%%%%%%%%%%%%
\section*{5. Reconstruction of Bernoulli numbers $\BCHPS$ from frequencies $n_{b,k}$}
%%%%%%%%%%%%%%%%%%%%%%%%%%%%%%%%%%%%%%%%%%%%%%%%%%%%%%%%%%%%%%%%%%%%%%%%%%%%%%%%%%%%%%%%%%%%

Let $p>3$, $b\ge 2$, $p\NDIV b$, be such that  the (multiplicative) order of $b$ mod $p$ is $(p-1)/2$. Suppose that the frequencies $n_{b,k}$, $k=0,\LD,b-1$, are given. From formula (\ref{1.7})
we obtain the respective numbers $\delta_{b,k}$. Since
\BD
  \gamma_{b,k+1}=\delta_{b,0}+\cdots +\delta_{b,k},
\ED
we know the numbers $\gamma_{b,1}$, \LD, $\gamma_{b,b}$ $(=0)$. Formula (\ref{1.12}) involves the Bernoulli number$ \BCH$, which is $0$, if $p\equiv 1$ mod $4$. If $p\equiv 3$ mod $4$, $\BCH$  can be found by
\BD
 \sum_{k=0}^{b-1}kn_{b,k}=\frac 14(b-1)(p-1)+\frac 12 (b-1)\BCH;
\ED
see \cite[Satz 11]{Gi1}. Because of (\ref{1.10}), we have $\gamma_{b,k}=\gamma_{d,l}$ for a certain divisor $d$ of $b$ and an integer $l$, $1\le l\le d$, $(d,l)=1$.
From (\ref{1.12}), we obtain a system of equations
\BE
\label{4.2}
  (\gamma_{d,l}+\BCH)\frac{\phi(d)}{\chi(d)}=\sum_{\psi\in X_d}\OV{\psi}(-lp)\BCHPS,\enspace l=1,\LD,d,\: (l,d)=1,
\EE
for each $d$. We form the matrix
$\Psi=(\OV{\psi}(l))$, where $l=1,\LD d$, $(l,d)=1$, and $\psi$ runs through $X_d$. This matrix is, essentially, unitary. Indeed, the well-known orthogonality relations for characters show
$\Psi\cdot\OV{\Psi}^T=\phi(d)\cdot I$, $I$ denoting the identity matrix. In other words, if we multiply the left-hand side of (\ref{4.2}) by the matrix $\OV{\Psi}^T/\phi(d)$,
we obtain the numbers $\OV{\psi}(-p)\BCHPS$, $\psi\in X_d$, and, thus, the numbers $\BCHPS$.

In other words, the information encoded in the digit frequencies $n_{b,k}$, $k=0,\LD,b-1$, is equivalent to the information encoded in the Bernoulli numbers $\BCHPS$ for $\psi\in X_d$, $d\DIV b$.

%\bigskip
%\centerline{\bf Competing interests and data availability}

%\MN
%The author declares that there are no competing interests. The paper has no associated data.

%%%%%%%%%%%%%%%%%%%%%%%%%%%%%%%%%%%%%%%%%%%%%%%%%%%%%
%%%%%%%%%%%%%%%%%%%%%%%%%%%%%%%%%%%%%%%%%%%%%%%%%%%%%%%%%%%%%%%%%%%%%%%%%%

\bigskip
\noindent
Institut f\"ur Mathematik \\
Universit\"at Innsbruck   \\
Technikerstr. 13/7        \\
A-6020 Innsbruck, Austria \\
Kurt.Girstmair@uibk.ac.at
% ORCID: 0000-0003-3105-5111}

\end{document}